%
%
%
%
%

\documentclass{amsart}[12pt,draft]

\usepackage{amsmath,amssymb,amscd,amsthm,indentfirst}
\usepackage{amsfonts,eufrak}
\usepackage[mathscr]{eucal}
\usepackage{cases}
\usepackage[all]{xy}
\usepackage{enumerate}

\usepackage{hyperref}

\swapnumbers
\newtheorem{prop}[equation]{Proposition}
\newtheorem{thm}[equation]{Theorem}
\newtheorem{cor}[equation]{Corollary}
\newtheorem{lem}[equation]{Lemma}

\theoremstyle{definition}
\newtheorem{defn}[equation]{Definition}

\newtheorem{remark}[equation]{Remark}

\newtheorem{exa}[equation]{Example}

\def\A{\ensuremath{\mathcal{A}}}

\def\I{\ensuremath{\mathcal{I}}}

\def\K{\ensuremath{\mathcal{K}}}

\def\P{\ensuremath{\mathcal{P}}}
\def\Q{\ensuremath{\mathcal{Q}}}

\def\NN{\ensuremath{\mathbb{N}}}
\def\ZZ{\ensuremath{\mathbb{Z}}}

\def\id{\textrm{id}}

\def\rank{\operatorname{rank}}

\def\ring{commutative ring with identity}

%
%
%
\newcounter{saveequation}

\date{\today}
\title{A local method for posets}
\author{Antonio D\'{i}az Ramos}
\address{Departamento de {\'A}lgebra, Geometr{\'\i}a y Topolog{\'\i}a,
Universidad de M{\'a}\-la\-ga, Apdo correos 59, 29080 M{\'a}laga,
Spain.}

\thanks{Supported by MICINN grant RYC-2010-05663. Partially supported by MEC grant MTM2013-41768-P and Junta de Andaluc{\'\i}a grant FQM-213.}

\email{adiazramos@uma.es}

\begin{document}

\begin{abstract}
We propose some conditions on a poset that produce a small chain complex for its homology. This allows to compare simplicial complexes and Quillen's complexes under the same prism. It turns out they differ in the existence or not of free faces in an acyclic complex.\\
MSC2010: 55U15, 18G35, 06A07.\\
\end{abstract}

\maketitle
\section{Introduction}\label{section:Introduction}
In this work we investigate a collection of posets that, roughly speaking, are those posets whose rays are contractible in a homogeneous way. Among them we find the face poset $\P(\Delta)$ of any abstract simplicial complex $\Delta$ and Quillen's complex $\A_p(G)$ of any finite group $G$ at a prime $p$ \cite{Quillen1978}. Recall that the face poset $\P(\Delta)$ is the poset of non-empty simplices of $\Delta$ ordered by inclusion and that $\A_p(G)$ is the poset of non-trivial elementary abelian $p$-subgroups of $G$ ordered by inclusion. In the setup we propose, simplicial complexes appear as the limit case of Quillen's complexes for $p=1$.

In order to introduce these notions, let $\P$ be a poset, $p\in \P$ an object and consider the subposet $\P_{\leq p}=\{r\in \P|r\leq p\}$. This ray $\P_{\leq p}$ is contractible as it contains the terminal object $p$, and hence acyclic. We assume that the ray $\P_{\leq p}$ is finite for all $p\in \P$ and that $\P$ does not contain a minimum element. Then we denote by $\hat\P$ the poset obtained by augmenting $\P$ with a minimum element $\hat0$. Assume there exists a function $\dim:\hat\P\to \ZZ$ such that $\dim p=\dim q+1$ if $p$ covers $q$, and with $\dim\hat 0=-1$. We call a poset equipped with such a function a \emph{graded} poset. We shall say that a collection $\K=\{\K_p,\eta_p\}_{p\in \P}$, where $\K_p$ is a subposet of $\hat\P_{\leq p}$ containing $\hat 0$ and $\eta_p\colon \K_p\rightarrow \hat\P_{\leq p}\setminus \K_p$ is a map, is a \emph{local covering family} for $\P$ if:
\[
\text{for all $p\in \P$ and all $r\in \K_p$: $r<\eta_p(r)$ and $\dim
 \eta_p(r)=\dim r+1$,}
\]
and the maps $\{\eta_p\}_{p\in \P}$ are compatible in a certain way (see Definition \ref{definitionlocalcoveringfamily} for full details). These conditions are similar to those of Stanley on decomposition of acyclic simplicial complexes \cite[Theorem 1.2]{Stanley1993} and of Forman on Discrete Morse Theory for cell complexes \cite{Forman1998}. They differ in that there the maps $\eta_p$ are assumed to be bijective, a condition that is not imposed here.

For instance, let $\Delta$ be a (possibly infinite) simplicial complex and choose a total order on its vertices. Consider its face poset $\P(\Delta)$ and, for $\sigma \in \P(\Delta)$, set $\dim(\sigma)=|\sigma|-1$ and define $\K_\sigma$ and $\eta_\sigma$ by
\[
\text{$\K_\sigma=\{\tau\subseteq \sigma| \sigma^*\notin \tau\}$ and  $\eta_\sigma(\tau)=\tau\cup \{\sigma^*\}$},
\]
where $\sigma^*=\min \sigma$. Then $\K_\Delta=\{\K_\sigma,\eta_\sigma\}_{\sigma\in \Delta}$ is a local covering family for $\P(\Delta)$. In this case the maps $\eta_\sigma$ are bijections and $\hat 0$ corresponds to the empty simplex.

The situation for Quillen's complex $\A_p(G)$ of the finite group $G$ at the prime $p$ is similar. Start by choosing a total order for the order-$p$ subgroups of $G$. Then we can define a local covering family $\K_{\A_p(G)}$ for $\A_p(G)$ by setting, for $H\in \A_p(G)$, $\dim(H)=\rank(H)-1$ and the following:
\[
\text{$\K_H=\{I\leq H|H^*\nleq I\}$ and $\eta_H(I)=\langle I,H^*\rangle$},
\]
where $H^*=\min\{V|\text{$\rank(V)=1$ and $V\leq H$}\}$. Note that in this case the maps $\eta_H$ are  surjections and $\hat 0$ corresponds to the trivial subgroup.

It is not hard to see (see below) that similar local covering families can be constructed on any poset $\P$ for which for all $p\in \P$ we have that $\P_{\leq p}$ is isomorphic to either the face poset of a $(\dim p)$-simplex or to $\A_p(C_p^{\dim p+1})$. We call such posets \emph{locally simplicial} posets or \emph{locally $p$-Quillen} posets respectively.

Next, we further discuss the use of local covering families. Throughout this paper $R$ is a \ring . Let $\Delta$ be a simplicial complex. We denote by $C_*(\Delta;R)$ the simplicial chain complex of $\Delta$ with coefficients in $R$.  We also denote by $|\Delta|$ the topological realization of $\Delta$ and, for a poset $\P$, we define its realization by $|\P|=|\Delta(\P)|$, where $\Delta(\P)$ denotes the order complex of the poset $\P$. In particular, if $\P$ is a poset we compute the homology of its realization via  $H_*(|\P|;R)\cong H_*(C_*(\Delta(\P);R))$.

For the face poset $\P= \P(\Delta)$ of the simplicial complex $\Delta$, we have 
\[
H_*(|\P|;R)\cong H_*(C_*(\Delta(\P);R))\cong H_*(C_*(\Delta;R))
\]
and so we can compute $H_*(|\P|;R)$ using the smaller chain complex $C_*(\Delta;R)$ instead of the larger $C_*(\Delta(\P);R)$. This is not true in general as there is no such thing as $C_*(\P;R)$ for arbitrary $\P$. What we prove here is that if the poset $\P$ is equipped with a local covering family $\K$ then there is a chain complex $C_*^\K(\P;R)$ that does play the right role, i.e., it removes a subdivision. 

\begin{thm}\label{thm_introd_cc}
Let $\P$ be a graded poset with local covering family $\K$ and let $R$ be a \ring. Then there is a chain complex $C^\K_*(\P;R)$ with 
\[
C_n^\K(P;R)=\bigoplus_{p\in \P_n} R^{K^p_n}
\]
whose homology is $H_*(|\P|;R)$.
\end{thm}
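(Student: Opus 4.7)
The plan is to realize $C^\K_*(\P;R)$ as the homotopy-critical complex obtained by collapsing $C_*(\Delta(\P);R)$ via the local covering family, in the spirit of algebraic discrete Morse theory.

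First I fix the form of the generators. Since $p$ is the unique element of dimension $n=\dim p$ in $\hat\P_{\leq p}$ and the local covering family requires $\dim\eta_p(r)=\dim r+1$, every $k\in\K_p$ with $\dim k=n-1$ must satisfy $\eta_p(k)=p$. I therefore take $K^p_n:=\eta_p^{-1}(p)=\{k\in\K_p\mid \dim k=n-1\}$, so that generators of $C^\K_n$ are naturally indexed by pairs $(p,k)$ with $p\in\P_n$ and $\eta_p(k)=p$. In the simplicial example, this fiber is the singleton $\{\sigma\setminus\{\sigma^*\}\}$ and one recovers the standard chain complex $C_*(\Delta;R)$.

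Next I filter the simplicial chain complex of the order complex by top dimension,
\[
F_n C_*(\Delta(\P);R)=\mathrm{span}\{(p_0<\cdots<p_k)\mid \dim p_k\leq n\},
\]
with associated graded $\mathrm{gr}_n=\bigoplus_{p\in\P_n} D_p$, where $D_p$ consists of chains in $\Delta(\P_{\leq p})$ whose maximal vertex is $p$. Deleting $p$ identifies $D_p$ with a degree-shifted copy of the augmented chain complex of $\Delta(\P_{<p})$, so the analysis reduces to controlling these local pieces uniformly in $p$.

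On each local piece I use the pairing $k\mapsto\eta_p(k)$ to construct an explicit $R$-linear chain retraction of $D_p$ onto the subcomplex spanned by $K^p_n$. When $\eta_p$ is bijective (as in the simplicial case) this is an honest Morse matching and $D_p$ collapses cellularly onto the span of $K^p_n$; in the Quillen case $\eta_p$ is only surjective, but the conditions in Definition~\ref{definitionlocalcoveringfamily} still allow an algebraic contraction of the matched part via a formula averaging over the fibers of $\eta_p$. Assembling the local retractions yields a deformation retract $C_*(\Delta(\P);R)\to C^\K_*(\P;R)$ whose induced differential is a signed count involving cover relations $q\lessdot p$ in $\P$ combined with the maps $\eta_q$; the compatibility axioms on $\{\eta_p\}_{p\in\P}$ are precisely what is required for this differential to be well defined and to satisfy $d^2=0$.

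The main obstacle is the assembly step. For bijective $\eta_p$ the patching is a cellular matter and the critical complex is Forman's; in the general surjective case a single generator $(p,k)$ can interact with several lower-degree generators through fibers of the $\eta_q$, so one must argue that the local retractions glue consistently across $\P$ and that the resulting boundary is independent of choices inside each $\eta_q^{-1}(q)$. This is where the compatibility conditions of the local covering family (Definition~\ref{definitionlocalcoveringfamily}) are used in full strength.
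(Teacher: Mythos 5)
Your identification of the generating set is incorrect, and the error is not cosmetic. You set $K^p_n := \eta_p^{-1}(p) = \{k \in \K_p \mid \dim k = n-1\}$, but the paper's $K^p_n$ is defined by the recursion $K^p_0 = 1$, $K^p_{n+1} = \sum_{q\in(\K_p)_n} K^q_n$; equivalently, by Lemma~\ref{lemmaposetwithlcfislocallyspherical}, $K^p_n$ counts the full chains $p_0 < p_1 < \cdots < p_n = p$ with $\dim p_i = i$ and $p_i \in \K_{p_{i+1}}$ for all $i$. The two quantities agree in the simplicial example (both equal $1$), which makes your sanity check misleading; but for $\A_p(G)$ and $H\cong C_p^{n+1}$ your count is $|(\K_H)_{n-1}| = p^n$ while the correct rank is $K^H_n = p^{n(n+1)/2}$ (Examples~\ref{exampleQccoveringfamilyK} and~\ref{exampleQcnumberK}). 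The reason is that the local retraction of the piece $D_p$ does not land on a subspace indexed by $(\K_p)_{n-1}$: there is a homotopy equivalence $|\P_{<p}| \to \bigvee_{q\in(\K_p)_{n-1}}\Sigma|\P_{<q}|$, and each wedge summand contributes $K^q_{n-1}$ further classes, producing the recursion. Your proposed complex is therefore too small, e.g.\ already for $\A_p(G)$ in degree $n=2$ you would allot $p^2$ generators per $H$ where $p^3$ are needed.

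The ``averaging over fibers'' contraction for non-bijective $\eta_p$ is also unjustified. An algebraic Morse matching requires a bijection on the matched part; when $\eta_p$ is merely surjective there is no matching, and no generic averaging recipe produces a chain homotopy. What the paper does instead, inside the proof of Lemma~\ref{lemmaposetwithlcfislocallyspherical}, is a \emph{conical contraction} of $\I = \P_{<p}\setminus(\K_p)_{n-1}$ via the poset map $f$ with $f(x)=x$ for $x\notin\K_p$ and $f(x)=\eta_p(x)$ for $x\in\K_p$, which satisfies $x\leq f(x)\geq\eta_p(\hat 0)$; injectivity of $\eta_p$ is never needed. Collapsing $|\I|$ gives the wedge of suspensions above, and induction on dimension yields both local sphericity and the rank formula.

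Finally, the paper sidesteps your ``assembly step'' entirely: it does not build a global deformation retract. It filters $C_*(\Delta(\P);R)$ by the dimension of the top vertex of a chain and observes that local sphericity (which follows from the existence of $\K$) forces the resulting spectral sequence to have $E^1$ concentrated in a single row; that row, with its $d^1$ differential, \emph{is} the chain complex $C^\K_*(\P;R)$, and convergence of the spectral sequence gives $H_*(|\P|;R)$. This route yields the theorem with no gluing argument and no explicit homotopy equivalence to construct, which is precisely the difficulty you flag at the end and do not resolve.
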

Here $\P_n=\{r\in \P|\dim r=n\}$, the objects of dimension $n$. The numbers $K^p_*$ depend upon $\K$ and are defined inductively by $K^p_0=1$ and $K^p_{n+1}=\sum_{q\in (\K_p )_{n}} K^q_n$. A detailed description of the differential is given in Section \ref{section:Explicit differential}. Next we investigate the geometric meaning of the local covering families $\K_\Delta$ and $\K_{A_p(G)}$ defined above. In the former case, for any simplex $\sigma$ of the simplicial complex $\Delta$ we have $K^\sigma_{\dim(\sigma)}=1$. The geometric meaning of $\K_\Delta$ is explained by the next result.

\begin{thm}\label{thm_introd_ccsimp}
Let $\Delta$ be a (possibly infinite) simplicial complex and let $\K_\Delta$ be the local covering family as above. Then
\[
C_*^{\K_\Delta}(\P(\Delta);R)\cong C_*(\Delta;R).
\]
\end{thm}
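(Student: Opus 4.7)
The plan has two main steps: identify the graded modules, then verify that the differentials coincide.

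First I would prove by induction on $\dim\sigma$ that $K^\sigma_{\dim\sigma}=1$ for every $\sigma\in\P(\Delta)$. The base case $\dim\sigma=0$ is the defining relation $K^\sigma_0=1$. For the inductive step, suppose $|\sigma|=n+2$. Then $(\K_\sigma)_n$ consists of the $n$-dimensional subsimplices of $\sigma$ that avoid $\sigma^*=\min\sigma$, i.e., of the $(n+1)$-element subsets of $\sigma\setminus\{\sigma^*\}$. Since $|\sigma\setminus\{\sigma^*\}|=n+1$, there is exactly one such subset $\tau$; by induction $K^\tau_n=1$, and the recursion $K^\sigma_{n+1}=\sum_{\tau'\in(\K_\sigma)_n}K^{\tau'}_n$ yields $K^\sigma_{n+1}=1$. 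This gives an identification of graded $R$-modules
\[
C_n^{\K_\Delta}(\P(\Delta);R)=\bigoplus_{\sigma\in\P(\Delta)_n}R^{K^\sigma_n}=\bigoplus_{\sigma\in\P(\Delta)_n}R\cong C_n(\Delta;R),
\]
where the generator indexed by $\sigma$ is sent to $\sigma$ itself. Unrolling the inductive computation, this generator corresponds to the unique descending chain $\sigma\supset\sigma\setminus\{v_0\}\supset\sigma\setminus\{v_0,v_1\}\supset\cdots\supset\{v_n\}$, obtained from the ordered vertices $v_0<\cdots<v_n$ of $\sigma$ by iteratively removing the current minimum.

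Next I would unfold the definition of the differential of $C_*^{\K_\Delta}$ given in Section \ref{section:Explicit differential} and rewrite it in the simplicial setting. The face $d_0\sigma=\sigma\setminus\{\sigma^*\}$ is the unique element of $(\K_\sigma)_n$, so it should enter the differential of the generator of $\sigma$ with coefficient $+1$. The remaining faces $d_i\sigma=\sigma\setminus\{v_i\}$ for $i\geq 1$ lie outside $\K_\sigma$; they must be produced by the part of the differential that records the discrepancy between the descending chain of $\sigma$ and the descending chains of its faces (i.e., by terms involving $\eta_\sigma$ or its iterates). A direct induction on dimension should match each of these contributions, with its sign, to the corresponding term $(-1)^i d_i\sigma$ of the simplicial boundary $\partial\sigma=\sum_{i=0}^{n}(-1)^i d_i\sigma$.

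The main obstacle is precisely this sign verification: while the module-level identification is essentially automatic from the inductive computation of $K^\sigma_{\dim\sigma}$, tracking how the recursively defined differential of $C_*^{\K_\Delta}$ assigns signs to each codimension-one face of $\sigma$ is a careful combinatorial bookkeeping. The induction should be organized so that the coefficient of $d_i\sigma$ is reduced to the coefficient of $d_{i-1}$ applied to $d_0\sigma$, where the minimum vertex has been removed, allowing the sign pattern to propagate correctly. Once the signs are matched, the isomorphism of chain complexes follows immediately.
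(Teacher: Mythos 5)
Your identification of the graded modules is correct and matches the paper's Example \ref{examplescnumberK}: the induction giving $K^\sigma_{\dim\sigma}=1$ is exactly what the paper does. However, the second and harder half of the argument --- showing that the differential of $C_*^{\K_\Delta}(\P(\Delta);R)$ is the simplicial boundary --- is left as a plan, not a proof. You say ``a direct induction on dimension should match each of these contributions'' and flag the sign bookkeeping as ``the main obstacle,'' but you do not resolve it, and the plan you sketch rests on a misleading picture: you write that the generator $b_\sigma$ ``corresponds to the unique descending chain $\sigma\supset\sigma\setminus\{v_0\}\supset\cdots\supset\{v_n\}$.'' That chain only indexes the sphere (as in Lemma \ref{lemmaposetwithlcfislocallyspherical}); the actual representing cycle $z_\sigma\in C_{n-1}(\Delta(\P_{<\sigma});R)$ is a much larger sum. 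The differential is computed via the truncations $t_p(z_\sigma)$ from Equation \eqref{equ:dPq}, so one genuinely needs a closed formula for $z_\sigma$, not just its indexing chain.

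The paper supplies the missing ingredient in Example \ref{examplescdifferential}: it proves by induction, using the suspension identity \eqref{equationsptppropertiesa} and the simplicial identities, the explicit formula
\[
z_\sigma=(-1)^n\sum_{i=0}^n(-1)^i\,s_{\partial_i(\sigma)}\bigl(z_{\partial_i(\sigma)}\bigr),
\]
and then reads off $d^{\P(\Delta)}_\sigma(b_\sigma)=\oplus_{i=0}^n(-1)^i b_{\partial_i(\sigma)}$ via Equations \eqref{equ:dPq} and \eqref{equationsptppropertiesc}. Your phrase ``terms involving $\eta_\sigma$ or its iterates'' does not really describe where the faces $d_i\sigma$ with $i\geq 1$ come from --- they come from the fact that $z_\sigma$ is built by \emph{two} cones over $z_{\partial_0(\sigma)}$ (the suspension $Z_1$ inside $\K_\sigma$ and a cone $Z_2$ inside the contractible complement $\I$), and it is $Z_2$ that carries those faces. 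Your proposed reduction of the coefficient of $d_i\sigma$ to that of $d_{i-1}$ applied to $d_0\sigma$ is morally the same as the simplicial-identity step in the paper's induction, but without the explicit formula for $z_\sigma$ it is not a proof. So: right opening move, genuine gap in the second half.
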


This means $\K_\Delta$ give us back the usual simplicial chain complex of $\Delta$. For the covering family constructed for $\A_p(G)$, where $G$ is a finite group and $p$ is a prime, we have $K^H_n=p^\frac{n(n+1)}{2}$ for a subgroup $H\cong C_p^{n+1}$. Theorem \ref{thm_introd_cc} applied to $\K_{A_p(G)}$ gives the following:

\begin{thm}\label{thm_introd_ccapg}
Let $G$ be a finite group and let $p$ be a prime. Then 
there is a chain complex with $n$-chains $\bigoplus_{H\in \A_p(G)_n} R^{p^\frac{n(n+1)}{2}}$ and with explicit differential whose homology is $H_*(|\A_p(G)|;R)$. In particular, 
\[
\chi(|\A_p(G)|)=\sum_{n=0}^{rk_p(G)-1} (-1)^np^\frac{n(n+1)}{2}|\A_p(G)_n|.
\]
\end{thm}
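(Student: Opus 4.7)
The plan is to invoke Theorem~\ref{thm_introd_cc} applied to the graded poset $\A_p(G)$ with the local covering family $\K_{\A_p(G)}$ from the Introduction. The theorem immediately yields a chain complex
\[
C_n^{\K}(\A_p(G);R)=\bigoplus_{H\in \A_p(G)_n}R^{K^H_n}
\]
computing $H_*(|\A_p(G)|;R)$, with differential prescribed in Section~\ref{section:Explicit differential}. The real content is therefore to identify the rank $K^H_n$ with $p^{n(n+1)/2}$ for every $H\in\A_p(G)_n$, i.e., every elementary abelian $p$-subgroup of rank $n+1$, and then to read off the Euler characteristic from a finite free complex.

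I would prove the rank formula by induction on $n$. The base case $n=0$ is built into the definition: $K^H_0=1=p^{0\cdot 1/2}$. For the inductive step, expand the recursion
\[
K^H_n=\sum_{q\in(\K_H)_{n-1}}K^q_{n-1}.
\]
Here $(\K_H)_{n-1}$ consists of the rank-$n$ subgroups $I\leq H\cong C_p^{n+1}$ with $H^*\nleq I$, that is, the $\FF_p$-hyperplanes of $H$ avoiding the fixed line $H^*$. A standard projective-space count gives $(p^{n+1}-1)/(p-1)$ hyperplanes in total, and $(p^n-1)/(p-1)$ of them contain $H^*$ (in bijection with hyperplanes of $H/H^*\cong C_p^n$), so the difference is $p^n$. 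Each such $I$ is isomorphic to $C_p^n$, hence by induction $K^I_{n-1}=p^{n(n-1)/2}$, and summation yields
\[
K^H_n=p^n\cdot p^{n(n-1)/2}=p^{n(n+1)/2}.
\]
Note that, as a byproduct, $K^H_n$ depends only on the rank of $H$, so the induction closes even though the recursion refers to subgroups built from different choices of $H^*$.

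The Euler characteristic formula is then immediate: the complex $C_*^{\K}(\A_p(G);R)$ is a finite complex of free $R$-modules, with $C_n^{\K}$ of rank $p^{n(n+1)/2}|\A_p(G)_n|$ and supported in degrees $0\leq n\leq rk_p(G)-1$, since $\A_p(G)$ contains no subgroup of rank exceeding $rk_p(G)$. The chain-level alternating sum of ranks then equals $\chi(|\A_p(G)|)$, giving the displayed formula. The only step requiring input beyond the formal machinery of Theorem~\ref{thm_introd_cc} is the linear-algebra count of hyperplanes of $C_p^{n+1}$ avoiding a fixed line, which is standard, so I anticipate no substantive obstacle.
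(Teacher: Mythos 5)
Your proposal matches the paper's proof, which appears in Example~\ref{exampleQcnumberK}: the paper likewise applies Theorem~\ref{chaincomplexforposetwithlcf} to $\A_p(G)$ with the local covering family $\K_{\A_p(G)}$ and simply states ``it is easy to check that $|(\K_H)_{n-1}|=p^n$ and $K^H_n=p^{n(n+1)/2}$.'' You supply the routine hyperplane count and the induction that the paper leaves to the reader; the argument is the same.
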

Here, $rk_p(G)$ is the $p$-rank of $G$, i.e., the largest dimension of an elementary abelian $p$-subgroup of $G$. 
Theorems \ref{thm_introd_ccsimp} and \ref{thm_introd_ccapg} are clearly independent of the total orders chosen to define the local covering families $\K_\Delta$ and $\K_{\A_p(G)}$ respectively. The differential of the chain complex in Theorem \ref{thm_introd_ccapg} behaves ``locally" as the simplicial differential (see Example \ref{exampleQcdifferential}). In order to compare the two local covering families introduced so far, $\K_\Delta$ and $\K_{\A_p(G)}$, note first that the formulas for $K_n^H$ and $K^\sigma_n$ coincide if we let $p=1$. Nevertheless, there are qualitative differences: call an object of the poset $\P$ a \emph{free object} if it is  a non-maximal object which is smaller than a unique maximal object. So for simplicial complexes this is the usual concept of free face. It is well known that there exist simplicial complexes which are contractible but nevertheless they do no have a free face, e.g., any triangulation of the dunce hat. The situation for Quillen's complex, or more generally for locally $p$-Quillen posets, is quite the opposite. We define the dimension of $\P$ by $\dim \P=\max_{p\in \P} \dim p$ if the maximum exists and by $\dim \P=\infty$ otherwise.

%

\begin{thm}\label{thm_intro_freefaceonp}
Let $\P$ be a locally $p$-Quillen finite poset at the prime $p$ with $\dim \P=N$. If the homology group $\widetilde H_N(|\P|;R)$ is zero then the proportion $r$ of free objects among the non-maximal objects of $\P_{N-1}$ satisfies:
\[
r\geq \frac{p^{N+1}-2p^N+1}{p^{N+1}-p^N}>0.
\]
\end{thm}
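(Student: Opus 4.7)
The plan is to apply Theorem \ref{thm_introd_cc} to the chain complex associated to a local covering family on $\P$ and combine injectivity of the top differential with a double-counting of covering pairs between $\P_N$ and $\P_{N-1}$.

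First, since $\P$ is locally $p$-Quillen, the construction mentioned after Theorem \ref{thm_introd_cc} gives $\P$ a local covering family $\K$ with $K_n^x=p^{n(n+1)/2}$ for every $x\in\P_n$. Theorem \ref{thm_introd_cc} then yields a chain complex $C^\K_*(\P;R)$ computing $H_*(|\P|;R)$ with
\[
C_N^\K(\P;R)=R^{|\P_N|\,p^{N(N+1)/2}},\qquad C_{N-1}^\K(\P;R)=R^{|\P_{N-1}|\,p^{(N-1)N/2}}.
\]
Because $\dim\P=N$, we have $C_{N+1}^\K=0$, and the hypothesis $\widetilde H_N(|\P|;R)=0$ forces $\partial_N$ to be injective. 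By the construction of $\partial_N$ from $\K$, the image of the $R^{K_N^x}$ summand at $x\in\P_N$ is supported on those $R^{K_{N-1}^y}$ with $y<x$ and $\dim y=N-1$; all such $y$ are non-maximal in $\P_{N-1}$. Writing $\P_{N-1}^{nm}$ for the set of non-maximal objects of $\P_{N-1}$, injectivity combined with the strong rank condition for commutative rings gives
\[
|\P_N|\,p^{N(N+1)/2}\ \leq\ |\P_{N-1}^{nm}|\,p^{(N-1)N/2},
\]
so that $|\P_{N-1}^{nm}|\geq |\P_N|\,p^N$.

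Next, I count covering pairs $(x,y)$ with $x\in\P_N$, $y\in\P_{N-1}$ and $y<x$. Since $\P_{\leq x}\cong \A_p(C_p^{N+1})$, the set of such $y$ is in bijection with the hyperplanes of $\FF_p^{N+1}$, of cardinality $(p^{N+1}-1)/(p-1)$. Hence the number of covering pairs equals $|\P_N|\,(p^{N+1}-1)/(p-1)$. Counting the same pairs from below, each free object of $\P_{N-1}$ contributes $1$ and each non-free element of $\P_{N-1}^{nm}$ contributes at least $2$, so denoting by $F$ the number of free objects,
\[
|\P_N|\,\frac{p^{N+1}-1}{p-1}\ \geq\ F+2(|\P_{N-1}^{nm}|-F)\ =\ 2|\P_{N-1}^{nm}|-F.
\]
Rearranging and dividing by $|\P_{N-1}^{nm}|$, then inserting the lower bound $|\P_{N-1}^{nm}|\geq |\P_N|\,p^N$, yields
\[
r=\frac{F}{|\P_{N-1}^{nm}|}\ \geq\ 2-\frac{p^{N+1}-1}{(p-1)p^N}\ =\ \frac{p^{N+1}-2p^N+1}{p^{N+1}-p^N},
\]
and positivity follows from $p^{N+1}-2p^N+1=p^N(p-2)+1\geq 1$.

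The only delicate point is the assertion that the image of $\partial_N$ is already supported on the summands indexed by $\P_{N-1}^{nm}$; this is immediate once one inspects the explicit description of the differential associated to a local covering family (the section on explicit differentials in the body of the paper), since the boundary of a generator at $x\in\P_N$ is a combination of generators attached to elements strictly below $x$ in $\P$. Everything else is essentially a rank count and a double-counting argument, so I expect no serious obstacle beyond carefully pinning down this support statement.
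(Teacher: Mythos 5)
Your argument is correct and follows essentially the same route as the paper's: derive the rank inequality from injectivity of $\partial_N$ together with the observation that its image is supported on the summands indexed by non-maximal elements of $\P_{N-1}$, double-count the covering pairs between $\P_N$ and $\P_{N-1}$ using the hyperplane count $(p^{N+1}-1)/(p-1)$, and combine. The only cosmetic difference is that the paper partitions $\P''$ according to the number $n_p$ of covers while you split directly into free and non-free objects; you are also slightly more explicit than the paper in flagging that the step from injectivity to the rank inequality relies on the strong rank condition for commutative rings and in isolating the support claim for $\partial_N$.
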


In particular, contractible locally $p$-Quillen posets always have a free object. Furthermore, the ratio $r$ in the statement tends to $1^-$ as $p\to \infty$, i.e., asymptotically on the prime $p$, a locally $p$-Quillen poset $\P$ of dimension $N$ satisfying $H_N(|\P|;R)=0$ can be collapsed into $\P_{\leq N-1}=\{r\in \P|\dim r\leq N-1\}$. This result seems to pave the way for an asymptotic approach to Quillen's conjecture \cite{Quillen1978} on the poset $\A_p(G)$. 

Note that there exist posets that admit a local covering family but are not shellable: In  \cite{Shareshian2004}, Shareshian showed that there are finite groups $G$ such that the homology of $\A_p(G)$ has torsion. This implies that $\A_p(G)$ cannot be shellable. Nevertheless, $\A_p(G)$ can always be equipped with a local covering family as above. Also, there are posets that may be equipped with a local covering family but are not Cohen-Macaulay: any non-Cohen-Macaulay simplicial complex is an example.

In general, a poset $\P$ will admit a local covering family if it is locally atom-modular, i.e., if all the subposets $\{\P_{\leq p}\}_{p\in \P}$ are \emph{atom-modular}. Here, we say that a graded poset $\Q$ is  \emph{atom-modular} if for every atom $a\in \Q$ and every $q\in \Q$ we have either $a\leq q$ or the least upper bound $a\vee q$ exists and satisfies $\dim(a\vee q)=\dim(q)+1$. For instance, if $\hat Q$ is either a graded semimodular lattice or a Boolean lattice then $\Q$ is atom-modular. This covers the lattice of finite subsets of a set and finite-dimensional subspaces of a vector space. To define a local covering family on a locally atom-modular poset $\P$, choose a well ordering for its atoms and define $p^*=\min\{a\leq p|\dim a=0\}$. Then set
\[
\text{$\K_p=\{r\leq p|p^*\nleq r\}$ and $\eta_p(r)=p^*\vee r$}.
\]
This construction applies to the aforementioned locally simplicial posets and locally $p$-Quillen posets and coincides with the description given for simplicial complexes and Quillen's complexes of finite groups.

\textbf{Organization of the paper:} In Section \ref{section:wedgicalposets} we introduce preliminary notions, including a (folklore) chain complex for a graded poset, and certain notions of ``suspension" and ``truncation". This is followed in Section \ref{section:Localcoveringfamilies} by the definition of local covering family and proof of Theorems \ref{thm_introd_cc} and \ref{thm_introd_ccapg}. The study of the differential is postponed until Section \ref{section:Explicit differential}, where Theorem \ref{thm_introd_ccsimp} is also proven. The treatment of free objects is carried out in Section \ref{section:freeobjects}.

\textbf{Acknowledgements:} I am in debt to Torsten Ekedahl, who sadly passed away in 2011. He improved the definition of local covering family, removing some integer equalities from the original notion, and also provided a more geometric proof of Theorem \ref{thm_introd_cc}. The terminology ``atom-modular" is due to him.

\section{Spherical posets}\label{section:wedgicalposets}
We only consider posets $\P$ that do not have a smallest element and such that $\P_{\leq p}:=\{r\in\P|r\leq p\}$ is finite for all $p\in \P$. We also use the notation $\P_{<p}:=\{r\in\P|r<p\}$, and, for an integer $n$, we write $\P_{\leq n}:=\{r\in\P|\dim r \leq n\}$  and $\P_{n}:=\{r\in\P|\dim r =n\}$. A grading on a poset $\P$ is a a function $\dim\colon\P\rightarrow \NN$ such that $p<q\implies\dim p<\dim q$ and such that $\dim$ takes the value $0$ on minimal elements. We also assume that $\dim p=\dim q+1$ if $p$ covers $q$. A poset with a grading is termed a \emph{graded} poset, and every poset we consider is graded unless stated otherwise. We define the \emph{dimension} of the poset $\P$ by $\dim \P=\max_{p\in \P} \dim p$ if the maximum exists and by $\dim \P=\infty$ otherwise.
If $\P$ is a poset we denote by $\hat\P$ the poset obtained by augmenting $\P$ with a minimum element $\hat0$ for which $\dim\hat0=-1$ (and thus $\hat\P$ is formally speaking not a poset with a grading). Using an expression $\hat\P$ automatically means that $\hat\P$ is the augmentation of a graded poset $\P$.

Following Quillen \cite{Quillen1978}, we say that a poset $\P$ of dimension $d$ is \emph{$d$-spherical} if $|\P|$ is $(d-1)$-connected, or equivalently, if it has the homotopy type of a bouquet of $d$-spheres. We say that the poset $\P$ is \emph{locally spherical} if $\P_{<p}$ is $(\dim p-1)$-spherical for each $p\in \P$. Here we define a bouquet of $(-1)$-spheres as the empty set $\emptyset$.

Now let $\P$ be a graded poset, let $R$ be a \ring{ }and let $C_*(\Delta(\P);R)$ the chain $R$-complex that computes the (simplicial) reduced homology of the realization $|P|=|\Delta(\P)|$ of $\P$ with coefficients in $R$, $\widetilde H_*(|\P|;R)$. The $n$-chains are given by
\[
C_n(\Delta(\P);R)=\bigoplus_{p_0<\ldots<p_n} R
\]
for $n\geq 0$ and by $C_{-1}(\Delta(\P);R)=R$. It is equipped with the usual differential $\partial=\sum_{i=0}^n (-1)^i\partial_i$ and the usual augmentation map $\epsilon$. Here, for $0\leq i\leq n$, the $R$-linear map $\partial_i\colon C_n(\Delta(\P);R)\to C_{n-1}(\Delta(\P);R)$ sends
\[
p_0<\ldots<p_n\mapsto p_0<\ldots<p_{i-1}<p_{i+1}<\ldots<p_n,
\]
and the augmentation map sends $p_0\in \P$ to $\epsilon(p_0)=1\in R$. The filtration of spaces 
\[
\emptyset\subseteq |\P_{\leq 0}|\subseteq\ldots\subseteq |\P_{\leq i}|\subseteq |\P_{\leq i+1}|\subseteq\ldots\subseteq |\P|
\]
gives rise to the following increasing filtration of this chain complex
\[
F_iC_j(\Delta(\P);R)=\bigoplus_{p_0<\ldots<p_j\textit{, $\dim p_j\leq  i$}} R
\]
for $j\geq 0$, and $F_iC_{-1}(\Delta(\P);R)=R$ for $i\geq -1$ and $0$ otherwise. This filtration is exhaustive and bounded below and hence we have the following (folklore) convergent homological-type spectral sequence
\[
E^1_{i,j}=\widetilde H_{i+j}(F_iC_{i+j}/F_{i-1}C_{i+j}) \Rightarrow \widetilde H_{i+j}(|\P|;R).
\]
Notice that $F_iC_{i+j}/F_{i-1}C_{i+j}=\bigoplus_{p_0<\ldots<p_{i+j}\textit{, $\dim p_{i+j}=i$}} R$ which is $0$ unless $j\leq 0$. Moreover, we have 
\[
\widetilde H_{i+j}(F_iC_{i+j}/F_{i-1}C_{i+j})\cong
\widetilde H_{i+j}(\bigvee_{\dim p=i} \Sigma|P_{<p}|;R)\cong
\bigoplus_{\dim p=i}  \widetilde H_{i+j-1}(|P_{<p}|;R)
\]
if $j\leq 0$ and $0$ otherwise, where we define for convenience $\widetilde H_{-1}(\emptyset;R)=R$. So, if $\P$ is locally spherical, the spectral sequence degenerates to the chain complex
\begin{equation}\label{chaincomplexforlocallyspherical}
 \ldots\to\bigoplus_{\dim p=i}\widetilde H_{i-1}(|\P_{<p}|;R) \rightarrow \ldots\rightarrow 
\bigoplus_{\dim p=1} \widetilde H_0(|\P_{<p}|;R)\rightarrow
\bigoplus_{\dim p=0} R\rightarrow R.
\end{equation}
For future use we introduce, for a fixed object $p\in \P$ of dimension $n$, the linear maps ``suspension at $p$'', $s_p$, and ``truncation at $p$'', $t_p$:
\[
\xymatrix{
C_{n-1}(\Delta(P_{<p});R)\ar@(ur,ul)[r]^{s_p}& 
F_nC_n(\Delta(\P);R).\ar@(dl,dr)[l]_{t_p}\\
}
\]
They are defined on basic elements by
\begin{align*}
s_p(p_0<\ldots<p_{n-1})&=p_0<\ldots<p_{n-1}<p\textit{ and}\\
t_p(p_0<\ldots<p_{n-1}<p_n)&=\begin{cases} p_0<\ldots<p_{n-1} & \textit{ if $p_n=p$,}\\
0 & \textit{ otherwise,}
\end{cases}
\end{align*}
where $\dim p_i=i$. They posses the following properties:
\begin{align}
\partial(s_p(z))&=s_p(\partial(z))+(-1)^nz\textit{, }\label{equationsptppropertiesa}\\
\partial_i(t_p(z))&=t_p(\partial_i(z))\textit{ for $i=0,\ldots,n-1$, and}\label{equationsptppropertiesb}\\
t_{p'}s_p&=\begin{cases}
\id & \textit{ if $p'=p$,}\\
0   & \textit{ otherwise,}\label{equationsptppropertiesc}
\end{cases}
\end{align}
where $p'$ is any object of dimension $n$. Employing Equations \eqref{equationsptppropertiesa} and \eqref{equationsptppropertiesb}, it is immediate that  suspension and truncation induce the inverse to each other isomorphisms:
\begin{equation}\label{isomorphismfromfilteredetosubposet}
\xymatrix{
\bigoplus_{\dim p=n} \widetilde H_{n-1}(|\P_{<p}|;R)\ar@(ur,ul)[r]^{\sum_{\dim p=n}[s_p]}
&
\widetilde H_n(F_nC_n/F_{n-1}C_n).\ar@(dl,dr)[l]_{\oplus_{\dim p=n}[t_p]}
}
\end{equation}

The chain complex (\ref{chaincomplexforlocallyspherical}) has homology equal to $\widetilde H_*(|\P|;R)$ and its differential $d$
\begin{equation}\label{differentialchaincomplexforlocallyspherical}
\widetilde H_n(F_nC_n/F_{n-1}C_n)\stackrel{d}\longrightarrow \widetilde H_{n-1}(F_{n-1}C_{n-1}/F_{n-2}C_{n-1})
\end{equation}
is induced by the differential $\partial$ of $C_*(\Delta(\P);R)$. 
Hence, the differential of the homology class $[z]\in \widetilde H_n(F_nC_n/F_{n-1}C_n)$ is  $d([z])=[(-1)^n\partial_n(z)]$.


\section{Local covering families}\label{section:Localcoveringfamilies}

Forman's Morse theory \cite{Forman1998} for cell complexes runs parallel to classical Morse Theory. For example, the homotopy type of a simplicial complex is determined by the unmatched simplices in an acyclic matching of the Hasse diagram of the complex \cite[Proposition 3.3]{Chari2000}. A partial converse to this result was proven by Stanley \cite[Theorem 1.2]{Stanley1993}. Here we describe similar notions for a poset that determine the homotype type of the realization $|\P_{<p}|$ for each $p\in \P$.
\begin{defn}
Let $\P$ be a graded poset. A local covering family $\K$ for $\P$ is a family of subposets $\K_p\subseteq \hat\P_{\leq p}$  and maps $\eta_p:\K_p\rightarrow \hat\P_{\leq p}\setminus \K_p$ for all $p\in \P$ such that:
\begin{enumerate}\label{definitionlocalcoveringfamily}
\item the element $\hat 0$ belongs to $\K_p$, \label{definitionlocalcoveringfamilyhat0belongs}
\item if $q\in \K_p$ then $q<\eta_p(q)$ and $\dim \eta_p(q)=\dim q+1$, 
\label{definitionlocalcoveringfamilyeta+1}
\item if $q\notin \K_p$ and $q\leq p$ then $\eta_p(\hat 0)=\eta_q(\hat 0)$,
\label{definitionlocalcoveringfamilyhat0}
\item if $q\in \K_p$, $r\in \K_p$ and $q\leq r$ then $\eta_p(q)\leq \eta_p(r)$, and
\label{definitionlocalcoveringfamilyposetmap1}
\item if $q\in \K_p$, $r\notin \K_p$ and $q\leq r\leq p$ then $\eta_p(q)\leq r$.
\label{definitionlocalcoveringfamilyposetmap2}
\end{enumerate}
\end{defn}

\begin{remark}\label{remarklcfdegree-1and0}
Note that condition \eqref{definitionlocalcoveringfamilyhat0belongs} implies that $(\hat \P_{\leq p})_{-1}=(\K_p)_{-1}=\{\hat 0\}$ and then condition \eqref{definitionlocalcoveringfamilyhat0} gives that $(\P_{\leq p})_0=(\K_p)_0\cup \{\eta_p(\hat 0)\}$.
\end{remark}


\begin{exa}\label{examplesccoveringfamilyK}
Let $\P(\Delta)$ be the face poset of a (possibly infinite) simplicial complex $\Delta$ graded by simplicial dimension and choose a total order on its vertices. We  equip $\P(\Delta)$ with a local covering family $\K_\Delta$ as follows: For the $n$-simplex $\sigma$ set $\sigma^*=\min \sigma$ and define the subposet $\K_\sigma\subseteq \hat\Delta_{<\sigma}$ by 
\[
\K_\sigma=\{\textit{subsets $\tau$ of $\sigma$ such that $\sigma^*\notin \tau$}\}.
\]
The map $\eta_\sigma$ sends $\tau\in \K_\sigma$ to $\eta_p(\tau)=\tau\cup\{\sigma^*\}$. It is straightforward that the axioms in Definition \ref{definitionlocalcoveringfamily} are satisfied.
\end{exa}
\begin{exa}\label{exampleQccoveringfamilyK}
Quillen's complex $\A_p(G)$ of a finite group $G$ may be graded by $\dim H=rank(H)-1$ and can also be equipped with a local covering family $\K_{\A_p(G)}$: First, give a total order to the order $p$ subgroups of $G$. Then, for $H\in \A_p(G)$, set $H^*=\min\{V|\text{$\dim(V)=0$ and $V\leq H$\}}$ and define 
\[
\K_H=\{\textit{subgroups $I\leq H$ such that $H^*\nleq I$}\}.
\] 
The map $\eta_H$ maps $I\in \K_H$ to the subgroup $\eta_H(I)$ of $H$ generated by $I$ and $H^*$. It is easy to check that Definition \ref{definitionlocalcoveringfamily} is fulfilled.
\end{exa}

\begin{lem}\label{lemmaposetwithlcfislocallyspherical}
Let $\P$ be a graded poset with a local covering family $\K$. Then $\P$ is locally spherical. For $p\in P$ with $\dim P=n$ the number of $(n-1)$-spheres in the bouquet $|\P_{<p}|$ is equal to the number of $n$-simplices $p_0<p_1<\ldots<p_n=p$ with $\dim p_i=i$ and $p_i\in \K_{p_{i+1}}$ for $i=0,\ldots,n-1$.
\end{lem}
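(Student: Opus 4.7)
The plan is induction on $n = \dim p$. The base case $n = 0$ is immediate: $\P_{<p} = \emptyset$ is the empty bouquet of $(-1)$-spheres by convention, and the unique chain $p_0 = p$ matches $K^p_0 = 1$.

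For the inductive step, assume the lemma for all $q \in \P$ with $\dim q < n$. Then every $q \in \P_{<p}$ has $\dim q < n$, so $\P_{<p}$ is locally spherical, and by Section \ref{section:wedgicalposets} the spectral sequence degenerates into the chain complex (\ref{chaincomplexforlocallyspherical}) computing $\widetilde{H}_*(|\P_{<p}|;R)$. Set $p^* := \eta_p(\hat 0)$ and $U^+ := \{r < p : p^* \leq r\}$. By axiom (3), any $r < p$ with $r \notin \K_p$ satisfies $p^* = \eta_p(\hat 0) = \eta_r(\hat 0) \leq r$, so $r \in U^+$; hence $\P_{<p} = (\K_p \setminus \{\hat 0\}) \cup U^+$. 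Since $U^+$ has minimum $p^*$, the subcomplex $|U^+|$ is contractible, and the collapse $|\P_{<p}| \to |\P_{<p}|/|U^+|$ is a homotopy equivalence.

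The core of the argument is to identify the quotient $Q := |\P_{<p}|/|U^+|$ with a wedge of $(n-1)$-spheres. For each top-dimensional element $r \in (\K_p)_{n-1}$, the chains of $\P_{<p}$ ending at $r$ (together with the $U^+$-extensions produced by applying $\eta_p$ to their subchains) assemble, after the collapse, into a subspace homotopy equivalent to $\Sigma|\P_{<r}|$. Axioms (4) and (5) guarantee that such $U^+$-extensions exist whenever a chain has top vertex $q_j \in \K_p$ with $\dim q_j < n-1$, since then $\eta_p(q_j) \in U^+$ and $\eta_p(q_j) \leq q_{j+1}$ for any $q_{j+1} \notin \K_p$ above $q_j$. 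Intersections $\P_{\leq r} \cap \P_{\leq r'}$ for distinct $r, r' \in (\K_p)_{n-1}$ have dimension at most $n-2$ and get absorbed into the basepoint after the collapse, so the $\Sigma|\P_{<r}|$-pieces wedge freely. By the induction hypothesis $|\P_{<r}| \simeq \bigvee_{K^r_{n-1}} S^{n-2}$, so $\Sigma|\P_{<r}| \simeq \bigvee_{K^r_{n-1}} S^{n-1}$; wedging over $r \in (\K_p)_{n-1}$ and using $K^p_n = \sum_{r \in (\K_p)_{n-1}} K^r_{n-1}$ yields $|\P_{<p}| \simeq \bigvee_{K^p_n} S^{n-1}$.

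The main obstacle is rigorously establishing the wedge decomposition of $Q$: one must verify both that each $r \in (\K_p)_{n-1}$ contributes exactly $\Sigma|\P_{<r}|$ after collapse and that the pieces indexed by distinct $r$'s meet only at the basepoint. Axioms (4) and (5), saying that crossings from $\K_p$ into its complement factor through $\eta_p$ and preserve order, together with the downward compatibility of atomic data encoded by axiom (3), are exactly what is needed to keep the crossings between $\K_p$ and $U^+$ and the lower intersections under control.
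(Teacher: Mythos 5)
Your overall strategy (induction, collapse a contractible subcomplex, identify the quotient as a wedge of suspensions) is the same as the paper's, but you collapse the wrong subcomplex and this breaks the wedge step. The paper takes $\I = \P_{<p}\setminus(\K_p)_{n-1}$ (all of $\P_{<p}$ except the top-dimensional objects in $\K_p$) and shows $|\I|$ is contractible via a conical contraction $f(x)=x$ for $x\notin\K_p$, $f(x)=\eta_p(x)$ for $x\in\K_p$, where axioms (4) and (5) are used precisely to make $f$ a poset map and axiom (3) to pin down the apex $\eta_p(\hat 0)$. With that choice, the decomposition of $|\P_{<p}|/|\I|$ is automatic: every simplex outside $\I$ is a chain whose top is some $q\in(\K_p)_{n-1}$, so it lies in the closed cone $q*|\P_{<q}|$, whose base $\P_{<q}$ sits entirely in $\I$ (since $\dim r<n-1$ for $r<q$); and two such cones for $q\ne q'$ meet only inside $\Delta(\P_{<q})\cap\Delta(\P_{<q'})\subseteq\Delta(\I)$, which is collapsed. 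Hence the quotient is literally $\bigvee_{q\in(\K_p)_{n-1}}\Sigma|\P_{<q}|$.

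You instead collapse $U^+=\{r<p: \eta_p(\hat 0)\le r\}$, which is indeed contractible for the trivial reason that it has a minimum — but this is a strictly smaller subcomplex, and the quotient $|\P_{<p}|/|U^+|$ is not obviously a wedge over $(\K_p)_{n-1}$. The specific assertion that ``intersections $\P_{\leq r}\cap\P_{\leq r'}$ for distinct $r,r'\in(\K_p)_{n-1}$ \ldots get absorbed into the basepoint after the collapse'' is false: such an intersection consists of elements strictly below both $r$ and $r'$, and these are generically \emph{not} in $U^+$ (already for $\P=\A_p(C_p^3)$ the intersection of two rank-2 subgroups not containing $p^*$ is a rank-1 subgroup $\ne p^*$, hence not in $U^+$). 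So the pieces you call $\Sigma|\P_{<r}|$ overlap outside the basepoint, and the free-wedge conclusion does not follow as stated. The description of ``$U^+$-extensions'' is also not a well-defined subcomplex assignment. The fix is to enlarge the collapsed set to all of $\I$ and prove its contractibility the way the paper does; the point of axioms (4) and (5) in this proof is exactly to provide that conical contraction, not to control the overlap after collapsing $U^+$.
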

\begin{proof}
We prove by induction on $n=\dim p$ that $|P_{<p}|$ is $(n-1)$-spherical with the given number of spheres. For $n=0$, the claim is clear. Let $p$ be of dimension $n\geq 1$ and consider the subposet given by $\I=\P_{<p}\setminus (\K_p)_{n-1}$. Then $\I$ is not empty because it contains $\eta_p(\hat 0)$. We show that $\I$ is conically contractible \cite[1.5]{Quillen1978} by exhibiting a map of posets $f:\I\rightarrow \I$ such that $x\leq f(x)\geq \eta_p(\hat 0)$ for all $x$ in $\I$. For this is enough to define $f(x)=x$ if $x<p$, $x\notin \K_p$ and $f(x)=\eta_p(x)$ if $x\in \K_p$. This map $f$ is a map of posets because $\K_p$ is a subposet and because of Definition \ref{definitionlocalcoveringfamily}(\ref{definitionlocalcoveringfamilyposetmap1}) and \ref{definitionlocalcoveringfamily}(\ref{definitionlocalcoveringfamilyposetmap2}). Now, by construction, the image of $f$ is contained in $P_{<p}\setminus \K_p$ and hence by Definition \ref{definitionlocalcoveringfamily}(\ref{definitionlocalcoveringfamilyhat0}) we have that $f(x)\geq \eta_{f(x)}(\hat 0)=\eta_p(\hat 0)$ for all $x$ in $\I$.

As $|\I|$ is contractible, the quotient map $|\P_{<p}|\rightarrow |\P_{<p}|/|\I|\simeq \bigvee_{q\in (\K_p)_{n-1}} \Sigma|\P_{<q}|$ is a homotopy equivalence. By induction, for each $q\in (\K_p)_{n-1}$, the realization $|P_{<q}|$ is a bouquet of $(n-2)$-spheres. Hence, $|\P_{<p}|$ is a bouquet of $(n-1)$-spheres and the counting formula for the number of $(n-1)$-spheres in the bouquet holds.
\end{proof}
\begin{cor}\label{cor:localhomologywithrank}
Let $\P$ be a graded poset with a local covering family $\K$ and let $R$ be a \ring. Define inductively the numbers $K^p_n$ for all $p\in \P$ and all $n\in \NN$ as follows: $K^p_0=1$ and $K^p_{n+1}=\sum_{q\in (\K_p )_{n}} K^q_n$. Then for all $p$ in $\P$ the reduced homology group $\widetilde H_{\dim p-1}(|\P_{<p}|;R)$ is free of rank $K^p_{\dim p}$.
\end{cor}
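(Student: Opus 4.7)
The plan is to reduce the statement to a bookkeeping identity between two recursively defined quantities, using Lemma \ref{lemmaposetwithlcfislocallyspherical} as the topological input. That lemma already establishes that $|\P_{<p}|$ is a wedge of $(\dim p-1)$-spheres and gives a combinatorial formula for the number of wedge summands, namely
\[
N^p_n := \#\{p_0<p_1<\ldots<p_n=p \mid \dim p_i=i,\ p_i\in \K_{p_{i+1}}\text{ for } i=0,\ldots,n-1\},
\]
where $n=\dim p$. Since reduced homology with $R$-coefficients of a wedge of $k$ copies of $S^{n-1}$ is a free $R$-module of rank $k$ (with the convention $\widetilde H_{-1}(\emptyset;R)=R$ used when $n=0$), I will have that $\widetilde H_{n-1}(|\P_{<p}|;R)$ is free of rank $N^p_n$. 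The corollary therefore reduces to the identity $N^p_n=K^p_n$.

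To prove $N^p_n=K^p_n$ I would induct on $n=\dim p$. The base case $n=0$ is immediate: the only chain of the required form is the trivial chain $p_0=p$, so $N^p_0=1=K^p_0$. For the inductive step, I would observe that any chain $p_0<\ldots<p_n=p$ contributing to $N^p_n$ is determined uniquely by the choice of $q:=p_{n-1}\in(\K_p)_{n-1}$ together with a chain $p_0<\ldots<p_{n-1}=q$ contributing to $N^q_{n-1}$, because the side conditions at $i<n-1$ depend only on the truncated chain. Summing over $q$ and applying the induction hypothesis yields
\[
N^p_n=\sum_{q\in(\K_p)_{n-1}} N^q_{n-1}=\sum_{q\in(\K_p)_{n-1}} K^q_{n-1}=K^p_n,
\]
matching the recursive definition of $K^p_n$ exactly.

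I do not anticipate a genuine obstacle here: all the work is packed into Lemma \ref{lemmaposetwithlcfislocallyspherical}, and what remains is the routine verification that the recursion defining $K^p_n$ counts the same chains that the lemma counts geometrically. The only mild care needed is the degenerate case $n=0$, where one must remember the convention $\widetilde H_{-1}(\emptyset;R)=R$ in order to keep the rank formula uniform.
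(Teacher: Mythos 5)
Your proof is correct and matches the approach the paper takes (implicitly, since the corollary is stated without a separate proof as an immediate consequence of Lemma~\ref{lemmaposetwithlcfislocallyspherical}): you take the wedge-of-spheres count from the lemma, observe that reduced homology of a wedge of $(n-1)$-spheres is free of the corresponding rank (with the $\widetilde H_{-1}(\emptyset;R)=R$ convention), and then verify by truncating the top element of each admissible chain that the chain count satisfies exactly the recursion defining $K^p_n$.
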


\begin{remark}\label{remark:sizeofKp1}
Note that, for an object $p\in \P$ with $\dim p=1$, we get $K^p_1=|(\K_p)_0|$ and this number, by Remark \ref{remarklcfdegree-1and0}, equals $|(\P_{\leq p})_0|-1$.
\end{remark}

Combining Equation \eqref{chaincomplexforlocallyspherical} with Corollary \ref{cor:localhomologywithrank} we obtain the following result, which is Theorem \ref{thm_introd_cc} of the Introduction.

\begin{thm}\label{chaincomplexforposetwithlcf}
Let $\P$ be a graded poset which has a local covering family $\K$ and let $R$ be a \ring. Then there is a chain complex $C^\K_*(\P;R)$
\[
\ldots\rightarrow\bigoplus_{p\in \P_i} R^{K^p_i}\rightarrow \ldots\rightarrow\bigoplus_{p\in \P_1} R^{K^p_1}\rightarrow\bigoplus_{p\in \P_0} R\rightarrow 0
\]
whose homology is $H_*(|\P|;R)$.
\end{thm}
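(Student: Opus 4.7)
The plan is to assemble the theorem by combining the two preceding results: Lemma~\ref{lemmaposetwithlcfislocallyspherical}, which says that a graded poset $\P$ equipped with a local covering family $\K$ is locally spherical, and Corollary~\ref{cor:localhomologywithrank}, which identifies the ranks of the local reduced homology groups $\widetilde H_{\dim p-1}(|\P_{<p}|;R)$ as the integers $K^p_{\dim p}$.

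First I would recall the setup from Section~\ref{section:wedgicalposets}. The filtration of $|\P|$ by the subspaces $|\P_{\leq i}|$ gives rise to the convergent spectral sequence
\[
E^1_{i,j}=\widetilde H_{i+j}(F_iC_{i+j}/F_{i-1}C_{i+j}) \Rightarrow \widetilde H_{i+j}(|\P|;R),
\]
and the $E^1$-term was computed in Section~\ref{section:wedgicalposets} to be $\bigoplus_{\dim p=i} \widetilde H_{i+j-1}(|\P_{<p}|;R)$ for $j\leq 0$ and zero otherwise. Applying Lemma~\ref{lemmaposetwithlcfislocallyspherical}, each $|\P_{<p}|$ is a bouquet of spheres of dimension exactly $\dim p - 1$, so $\widetilde H_{i+j-1}(|\P_{<p}|;R)$ vanishes unless $j=0$. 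Thus the spectral sequence is concentrated on the line $j=0$, it degenerates at $E^1$, and the abutment is computed by the chain complex \eqref{chaincomplexforlocallyspherical}, whose $i$-th term is $\bigoplus_{\dim p=i} \widetilde H_{i-1}(|\P_{<p}|;R)$.

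Next I would invoke Corollary~\ref{cor:localhomologywithrank} to identify each summand $\widetilde H_{\dim p - 1}(|\P_{<p}|;R)$ with the free $R$-module $R^{K^p_{\dim p}}$. Choosing such isomorphisms termwise, the complex \eqref{chaincomplexforlocallyspherical} is isomorphic to a chain complex whose $i$-th term is $\bigoplus_{p\in \P_i} R^{K^p_i}$, and one defines $C^\K_*(\P;R)$ to be this complex, with the differential induced from \eqref{differentialchaincomplexforlocallyspherical} via the chosen isomorphisms. Its homology agrees with that of \eqref{chaincomplexforlocallyspherical}, which in turn equals $\widetilde H_*(|\P|;R)$, and augmenting by the term in degree $-1$ yields $H_*(|\P|;R)$.

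The genuinely substantive content sits in the two earlier results (the local sphericality of $\P$ and the counting of spheres), both already established before this statement; once those are in hand the theorem is essentially a reindexing. The main subtlety I would expect, and which is deferred to Section~\ref{section:Explicit differential}, is not the existence of the complex but an explicit description of the boundary maps $\bigoplus_{p\in\P_i}R^{K^p_i}\to \bigoplus_{q\in\P_{i-1}}R^{K^q_{i-1}}$ compatible with the recursive definition $K^p_{i}=\sum_{q\in(\K_p)_{i-1}}K^q_{i-1}$, since the isomorphisms $\widetilde H_{\dim p -1}(|\P_{<p}|;R)\cong R^{K^p_{\dim p}}$ arise from iterated suspension/truncation (cf.\ the maps $s_p$, $t_p$ of \eqref{isomorphismfromfilteredetosubposet}), and tracking these through the differential $d([z])=[(-1)^n\partial_n(z)]$ is where the bookkeeping becomes delicate.
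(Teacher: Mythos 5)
Your proposal is correct and follows essentially the same route as the paper, which literally just says ``Combining Equation~\eqref{chaincomplexforlocallyspherical} with Corollary~\ref{cor:localhomologywithrank} we obtain the following result.'' One small slip in your last sentence: the complex~\eqref{chaincomplexforlocallyspherical} is the \emph{augmented} one (it has the extra $R$ in degree $-1$ and computes $\widetilde H_*$), whereas the displayed complex $C^\K_*(\P;R)$ in the theorem ends with $\to 0$ and is the \emph{unaugmented} version computing $H_*$; so it is dropping, not adding, the degree $-1$ term that passes from $\widetilde H_*$ to $H_*$.
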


Note that this chain complex does not depend (up to isomorphism) on the local covering family $\K$ as it is isomorphic to the chain complex (\ref{chaincomplexforlocallyspherical}) coming from the spectral sequence deduced from the grading of $\P$.
\begin{exa}\label{examplescnumberK}
Consider a (possibly infinite) simplicial complex $\Delta$ of finite dimension and the local covering family $\K_\Delta$  for the face poset $\P(\Delta)$ introduced in Example \ref{examplesccoveringfamilyK}. If $\sigma$ is of dimension $n$ we have $|(\K_\sigma)_{n-1}|=1$ and $K^\sigma_n=1$. These computations together with Theorem \ref{chaincomplexforposetwithlcf} applied to $\P(\Delta)$ and $\K_\Delta$ show that in Theorem \ref{thm_introd_ccsimp} of the Introduction we have an isomorphism of $R$-modules on each degree. We shall see in Section \ref{section:Explicit differential} that this isomorphism preserves also the differential, i.e., it is an isomorphism of chain complexes as claimed.
\end{exa}
\begin{exa}\label{exampleQcnumberK}
For Quillen's complex $\A_p(G)$ of a finite group $G$ at a prime $p$ consider the local covering family of Example \ref{exampleQccoveringfamilyK}. For a subgroup $H\cong C_p^{n+1}$ of dimension $n$ it is easy to check that $|(\K_H)_{n-1}|=p^n$ and $K^H_n=p^\frac{n(n+1)}{2}$. From these calculations and Theorem \ref{chaincomplexforposetwithlcf} applied to $\A_p(G)$ and $\K_{\A_p(G)}$ we obtain Theorem \ref{thm_introd_ccapg} of the Introduction.
\end{exa}

\section{Explicit differential}\label{section:Explicit differential}

In this section we study in detail the differential of the chain complex of Theorem \ref{chaincomplexforposetwithlcf}. Fix a graded poset $\P$ with local covering family $\K$ and let $R$ be a \ring. Consider the chain complex $C_*^\K(\P;R)$ of  Theorem \ref{chaincomplexforposetwithlcf} applied to $\P$ and $\K$. If $q\in \P$ and $n=\dim q$ we have, from Equations \eqref{isomorphismfromfilteredetosubposet} and \eqref{differentialchaincomplexforlocallyspherical}, the following commutative diagram:
\[
\xymatrix@=15pt{
\widetilde H_n(F_nC_n/F_{n-1}C_n)\ar[r]^<<<<<<{d}
&\widetilde H_{n-1}(F_{n-1}C_{n-1}/F_{n-2}C_{n-1})\\
\bigoplus_{\dim p=n} \widetilde H_{n-1}(|\P_{<p}|;R)\ar[r]\ar[u]^{\cong}&
\bigoplus_{\dim p=n-1} \widetilde H_{n-2}(|\P_{<p}|;R)\ar[u]^{\cong}\\
\widetilde H_{n-1}(|\P_{<q}|;R)\ar[r]^<<<<<<<<<{d^\P_q}\ar@{^(->}[u]&
\bigoplus_{p\in (\P_{<q})_{n-1}} \widetilde H_{n-2}(|\P_{<p}|;R).\ar@{^(->}[u]
}
\]
It is a routine computation that the map at the bottom is 
\begin{equation}\label{equ:dPq}
d^\P_q=\oplus_{p\in (\P_{<q})_{n-1}} (-1)^n[t_p].
\end{equation}
If we project from the codomain of $d^\P_q$ onto the components with $p\in (\K_q)_{n-1}$ we get another map that we denote by $d^\K_q$:
\begin{equation}\label{equ:dKq}
\xymatrix@=15pt{
\widetilde H_{n-1}(|\P_{<q}|;R)\ar[r]^<<<<<{d^\K_q}&
\bigoplus_{p\in (\K_q)_{n-1}} \widetilde H_{n-2}(|\P_{<p}|;R)\\
[z]\ar@{|->}[r]&d^\K_q([z])=\oplus_{p\in (\K_q)_{n-1}} (-1)^n[t_p(z)].
}
\end{equation}
This map coincides up to sign with the map in homology induced by the homotopy equivalence
\begin{equation}\label{equ:homtopyeqivalence}
|\P_{<q}|\rightarrow |\P_{<q}|/|\I|\simeq \bigvee_{p\in (\K_q)_{n-1}} \Sigma|\P_{<p}|,
\end{equation}
where $\I=\P_{<q}\setminus (\K_q)_{n-1}$. Hence, $d^\K_q$ is an isomorphism. This homotopy equivalence was already used in the proof of Lemma \ref{lemmaposetwithlcfislocallyspherical}.  We record this fact:

\begin{lem}\label{lemma:differentialKisiso}
Let $\P$ be a graded poset with local covering family $\K$ and let $R$ be a \ring. For each $q\in \P$, the map $d^\K_q$ described above is an isomorphism.
\end{lem}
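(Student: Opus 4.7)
The approach is to identify $d^\K_q$, up to an overall sign, with the map on reduced homology induced by the explicit homotopy equivalence \eqref{equ:homtopyeqivalence} used in the proof of Lemma \ref{lemmaposetwithlcfislocallyspherical}. Once that identification is made, the conclusion is immediate: a homotopy equivalence induces an isomorphism on reduced homology.

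Set $\I=\P_{<q}\setminus (\K_q)_{n-1}$. The proof of Lemma \ref{lemmaposetwithlcfislocallyspherical} exhibits a conical contraction of $\I$ toward $\eta_q(\hat 0)$, so $|\I|$ is contractible and the quotient map
$$\pi\colon |\P_{<q}|\longrightarrow |\P_{<q}|/|\I|$$
is a homotopy equivalence. For each $p\in (\K_q)_{n-1}$ the subposet $\P_{\leq p}$ has a terminal element, hence $|\P_{\leq p}|$ is the cone on $|\P_{<p}|$ with apex $p$; moreover $|\P_{\leq p}|\cap|\I|=|\P_{<p}|$, and for distinct $p,p'\in(\K_q)_{n-1}$ the intersection $|\P_{\leq p}|\cap|\P_{\leq p'}|$ is contained in $|\I|$ because $p$ and $p'$ are incomparable. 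Collapsing $|\I|$ therefore crushes each cone onto a suspension and identifies their bases, yielding the standard wedge decomposition
$$|\P_{<q}|/|\I|\;\simeq\;\bigvee_{p\in (\K_q)_{n-1}} \Sigma|\P_{<p}|.$$

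At the chain level a basic $(n-1)$-simplex $p_0<\dots<p_{n-1}$ of $\Delta(\P_{<q})$ maps to zero under $\pi_\#$ unless $p_{n-1}=p\in (\K_q)_{n-1}$; in that case it contributes to the summand indexed by $p$ exactly as the suspension $s_p(p_0<\dots<p_{n-2})$. Reading this through the suspension/truncation duality \eqref{equationsptppropertiesa}--\eqref{equationsptppropertiesc} and the isomorphism \eqref{isomorphismfromfilteredetosubposet}, one obtains that $\pi_\ast([z])$ corresponds to $\bigoplus_{p\in (\K_q)_{n-1}}[t_p(z)]$ in $\bigoplus_p\widetilde H_{n-2}(|\P_{<p}|;R)$. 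Incorporating the sign $(-1)^n$ dictated by \eqref{differentialchaincomplexforlocallyspherical}, we obtain precisely the formula \eqref{equ:dKq} defining $d^\K_q$. Thus $d^\K_q$ coincides, up to sign, with $\pi_\ast$ followed by the suspension isomorphisms, and is therefore an isomorphism.

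The only delicate step is the chain-level identification of $\pi$ with the collection of truncations $\{t_p\}_{p\in (\K_q)_{n-1}}$, which is ultimately bookkeeping: the relation $t_{p'}s_p=\delta_{p,p'}\id$ of \eqref{equationsptppropertiesc} shows that truncation splits the suspension on simplices with top face in $(\K_q)_{n-1}$, while simplices whose top face lies in $\I$ are killed by every $t_p$ and by $\pi$ alike. No other obstacle arises.
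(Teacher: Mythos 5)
Your proposal is correct and takes essentially the same approach as the paper: the paper also identifies $d^\K_q$, up to sign, with the map on reduced homology induced by the homotopy equivalence \eqref{equ:homtopyeqivalence} constructed in the proof of Lemma \ref{lemmaposetwithlcfislocallyspherical}, and deduces that $d^\K_q$ is an isomorphism. You simply spell out the chain-level identification via the truncations $t_p$ and the wedge decomposition, which the paper leaves implicit.
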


Furthermore, we show below that appropriate basis on each dimension for the chain complex $C_*^\K(\P;R)$ can be chosen so that the description of the isomorphisms $d^\K_q$ is very simple.
\begin{prop}\label{prop:differentialbasisinduction}
Let $\P$ be a graded poset with local covering family $\K$ and let $R$ be a \ring. There are explicit basis of $\widetilde H_{\dim q-1}(|\P_{<q}|;R)$ for all $q\in \P$ such that all isomorphisms $d^\K_q$ carry basic elements to basic elements.
\end{prop}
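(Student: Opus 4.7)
The plan is to construct the desired bases by induction on $n = \dim q$, using Lemma \ref{lemma:differentialKisiso} in an essential way. The base case $n = 0$ is immediate: since $\P_{<q} = \emptyset$, we have $\widetilde H_{-1}(|\P_{<q}|; R) \cong R$, and I take $B_q = \{1\}$. This is consistent with $K^q_0 = 1$.

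For the inductive step, suppose that for every $p \in \P$ with $\dim p < n$ a basis $B_p$ of $\widetilde H_{\dim p - 1}(|\P_{<p}|; R)$ has been chosen such that $d^\K_p$ carries $B_p$ bijectively onto $\bigsqcup_{r \in (\K_p)_{\dim p - 1}} B_r$ (the natural disjoint-union basis of the direct sum). Given $q$ with $\dim q = n$, the codomain of
\[
d^\K_q \colon \widetilde H_{n-1}(|\P_{<q}|; R) \longrightarrow \bigoplus_{p \in (\K_q)_{n-1}} \widetilde H_{n-2}(|\P_{<p}|; R)
\]
inherits the basis $B_{(q)} := \bigsqcup_{p \in (\K_q)_{n-1}} B_p$ from the inductive hypothesis. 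Since $d^\K_q$ is an isomorphism by Lemma \ref{lemma:differentialKisiso}, I simply set $B_q := (d^\K_q)^{-1}(B_{(q)})$. By construction $d^\K_q$ then sends $B_q$ bijectively onto $B_{(q)}$, and moreover $|B_q| = \sum_{p \in (\K_q)_{n-1}} K^p_{n-1} = K^q_n$, which matches the rank given by Corollary \ref{cor:localhomologywithrank}.

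To justify the word \emph{explicit}, I plan to unwind this recursion so that every element of $B_q$ is indexed by a chain $p_0 < p_1 < \ldots < p_{n-1}$ with $\dim p_i = i$, satisfying $p_i \in \K_{p_{i+1}}$ for $0 \leq i < n-1$ and $p_{n-1} \in \K_q$ --- exactly the chains counted by Lemma \ref{lemmaposetwithlcfislocallyspherical}. An explicit cycle representative for the basis element indexed by such a chain can be produced by iterating the suspension maps $s_{p_i}$ from Section \ref{section:wedgicalposets} and, at each stage, correcting by a boundary so that the result actually lies in the cycles of $F_{n-1}C_{n-1}(\Delta(\P_{<q}); R)$ rather than just in the associated graded quotient.

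The main obstacle is not existence --- which is guaranteed as soon as $d^\K_q$ is known to be an isomorphism --- but verifying that these iterated suspensions are consistent with the truncation formula in Equation \eqref{equ:dKq}. This boils down to inverting a triangular system governed by the suspension/truncation identities \eqref{equationsptppropertiesa}--\eqref{equationsptppropertiesc}; the inversion is formal because by \eqref{equationsptppropertiesc} the composition $t_{p_{n-1}} \circ s_{p_{n-1}}$ is the identity on the relevant summand. Once this is checked, the inductive construction yields explicit cycle representatives that are manifestly carried to basic elements by $d^\K_q$, completing the proof.
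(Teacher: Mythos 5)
Your overall strategy is the same as the paper's: define the basis inductively using the isomorphism $d^\K_q$, then argue that one can produce explicit cycle representatives by suspending and correcting by a boundary. The abstract step $B_q := (d^\K_q)^{-1}(B_{(q)})$ is a clean way to phrase what the paper does, and your rank count $|B_q|=\sum_{p\in(\K_q)_{n-1}}K^p_{n-1}=K^q_n$ is correct.

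Where the proposal falls short is precisely the part you flag as the ``main obstacle''; you describe it as ``inverting a triangular system'' and assert that the inversion is formal because of Equation~\eqref{equationsptppropertiesc}, but this is not quite enough. After setting $Z_1=(-1)^ns_p(z)$, one must choose a chain $Z_2$ with $\partial(Z_2)=z$ so that $Z=Z_1+Z_2$ is an honest cycle; the issue is that an arbitrary such $Z_2$ could have nonzero truncations $t_{p'}(Z_2)$ for $p'\in(\K_q)_{n-1}$, which would pollute the computation of $d^\K_q([Z])$ and spoil the claim that basic elements go to basic elements. The paper resolves this by noticing that $z$ already lies in $C_{n-2}(\Delta(\I);R)$ where $\I=\P_{<q}\setminus(\K_q)_{n-1}$, and that $|\I|$ is contractible (this was established in the proof of Lemma~\ref{lemmaposetwithlcfislocallyspherical}); hence one may choose $Z_2\in C_{n-1}(\Delta(\I);R)$ with $\partial(Z_2)=z$, and then $t_{p'}(Z_2)=0$ automatically for every $p'\in(\K_q)_{n-1}$, making the truncation computation immediate. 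Equation~\eqref{equationsptppropertiesc} alone does not tell you where to put $Z_2$; the contractibility of $\I$ and the disjointness $\I\cap(\K_q)_{n-1}=\emptyset$ are what actually close the argument, and your sketch does not identify this. You should also record, as the paper does, that replacing $Z_2$ by another boundary-inverse $Z'_2$ changes $Z$ by a boundary, so the class $[Z]$ is well defined.
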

\begin{proof}
We inductively build a basis of $\widetilde H_{n-1}(|\P_{<q}|;R)$ using the isomorphism $d^\K_q$. For $\dim q=0$ we define $\emptyset_q$ as a generator of $\widetilde H_{-1}(\emptyset;R)=R=R\emptyset_q$. Now assume a basis of $\widetilde H_{n-2}(|\P_{<p}|;R)$ has been constructed for every element $p$ with $\dim p\leq n-1$. We want to construct a basis of $\widetilde H_{n-1}(|\P_{<q}|;R)$ for $q$ with $\dim q=n$ in such a way that $d^\K_q$ carries basic elements to basic elements. This is the homological counterpart to constructing an explicit homotopy inverse to the homotopy equivalence \eqref{equ:homtopyeqivalence}, i.e., to suspend classes from $|\P_{<p}|$.

So fix a basic element $b\in \widetilde H_{n-2}(|\P_{<p}|;R)$ for some $p\in (\K_q)_{n-1}$. We want $B\in \widetilde H_{n-1}(|\P_{<q}|;R)$ with $d^\K_q(B)=b$. Write $b=[z]$ and $B=[Z]$ with $z\in C_{n-2}(\Delta(\P_{<p});R)$ and $Z\in C_{n-1}(\Delta(\P_{<q});R)$. Then it is enough to find $Z$ such that $\partial(Z)=0$ and $d^\K_q([Z])=[z]$ for $z$ with $\partial(z)=0$. Here, $\partial$ is the differential of the complex $C_*(\Delta(\P);R)$ restricted appropriately and $d^\K_q$ is described in \eqref{equ:dKq}. We shall decompose $Z=Z_1+Z_2$, where $Z_i$ correspond to a ``cone over z'' for $i=1,2$, giving a ``suspension of z''.

Set $Z_1$ as the ``suspension at $p$'', i.e., $Z_1=(-1)^ns_p(z)$, where we add the sign for convenience. In order to define $Z_2$, note first that $z\in C_{n-2}(\Delta(\I);R)$, where $\I=\P_{<q}\setminus (\K_q)_{n-1}$, and that $|\I|$ is contractible by the proof of Lemma \ref{lemmaposetwithlcfislocallyspherical}. Hence, there exists some element $Z_2\in C_{n-1}(\Delta(\I);R)$ whose differential is $\partial(Z_2)=z$. Now we have
\[
\partial(Z)=\partial((-1)^ns_p(z))+\partial(Z_2)=(-1)^n(s_p(\partial(z))+(-1)^{n-1}z)+z=0
\]
because of Equation \eqref{equationsptppropertiesa} and because $\partial(z)=0$. Moreover, 
\[
d^\K_q([Z])=\oplus_{p'\in (\K_q)_{n-1}} (-1)^n[(-1)^nt_{p'}(s_p(z))+t_{p'}(Z_2)]=[z],
\]
by Equations \eqref{equ:dKq} and \eqref{equationsptppropertiesc} and because $\I\cap (\K_q)_{n-1}=\emptyset$. If we choose $Z'_2$ with $\partial(Z'_2)=z$,  then $Z'_2-Z_2$ is a boundary and $[Z_1+Z'_2]=[Z_1+Z_2]$. Hence the class $[Z]$ is well defined.
\end{proof}

\begin{remark}\label{rmk:differentialbasisinductiondimq=1}
For $\dim q=1$, the inductive step gives as basis of $\widetilde H_0(|P_{<q}|;R)\cong R^{K^q_1}$ the elements $\{\eta_q(\hat 0)-p\}_{p\in(\K_q)_0}$, see Remarks \ref{remarklcfdegree-1and0} and \ref{remark:sizeofKp1}. 
\end{remark}



\begin{exa}\label{examplescdifferential}
Consider a (possibly infinite) simplicial complex $\Delta$ of finite dimension and the local covering family $\K_\Delta$ for the face poset $\P(\Delta)$ described in Example \ref{examplesccoveringfamilyK}. Denote by $\prec$ the total order chosen on the vertices of $\Delta$ and by $b_\sigma=[z_\sigma]$ the basic element constructed by the procedure described in Proposition \ref{prop:differentialbasisinduction} for each $\sigma\in \Delta$. For the $n$-simplex $\sigma=\{v_0,\ldots,v_n\}\in \Delta$ with $v_0\prec\ldots\prec v_n$ there is just one sphere in the bouquet $|\Delta_{<\sigma}|$ as $K^\sigma_n=1$  (Example \ref{examplescnumberK}). It corresponds to the inclusions of simplices $v_n< \{v_{n-1},v_n\}<\ldots< \{v_1,\ldots,v_n\}<\{v_0,\ldots,v_n\}=\sigma$ (cf. Lemma \ref{lemmaposetwithlcfislocallyspherical}).  We claim and prove by induction that for $n\geq 1$:
\begin{equation}\label{equ:descriptionzsigmasimplicialcase}
z_\sigma=(-1)^n\sum_{i=0}^n(-1)^i s_{\partial_i(\sigma)}(z_{\partial_i(\sigma)}).
\end{equation}

Note that $\partial_i(\sigma)=\{v_0,\ldots,\widehat {v_i},\ldots,v_n\}$ for $i=0,\ldots,n$. For $n=1$ and $\sigma=\{v_0,v_1\}$ we have, according to Remark \ref{rmk:differentialbasisinductiondimq=1}, $z_\sigma=v_0-v_1=s_{v_0}(\emptyset_{v_0})-s_{v_1}(\emptyset_{v_1})=s_{\partial_1(\sigma)}(z_{\partial_1(\sigma)})-s_{\partial_0(\sigma)}(z_{\partial_0(\sigma)})$.  For the inductive step, consider $\sigma=\{v_0,\ldots,v_n\}$ with $n\geq 2$. To construct $Z=z_\sigma$ we must suspend $z=z_{\partial_0(\sigma)}=z_{\{v_1,\ldots,v_n\}}$ by means of two cones  $Z=Z_1+Z_2$, where $Z_1=(-1)^n s_{\partial_0(\sigma)}(z)$ and $Z_2$ is some element such that $\partial(Z_2)=z$. So it is enough to verify that the element $Z_2=(-1)^n\sum_{i=1}^n (-1)^i s_{\partial_i(\sigma)}(z_{\partial_i(\sigma)})$ satisfies $\partial(Z_2)=z$:
\begin{align*}
\partial(Z_2)&=(-1)^n\sum_{i=1}^n (-1)^i \partial(s_{\partial_i(\sigma)}(z_{\partial_i(\sigma)}))=\\
&=(-1)^n\sum_{i=1}^n (-1)^i \left(s_{\partial_i(\sigma)}(\partial(z_{\partial_i(\sigma)}))+(-1)^{n-1}z_{\partial_i(\sigma)}\right)=-\sum_{i=1}^n (-1)^i z_{\partial_i(\sigma)},
\end{align*}
where we have used Equation \eqref{equationsptppropertiesa} and that $\partial(z_{\partial_i(\sigma)})=0$ as $z_{\partial_i(\sigma)}$ is a cycle. Using the induction hypothesis we get
\[
\partial(Z_2)=(-1)^n\sum_{i=1}^n \sum_{j=0}^{n-1} (-1)^{i+j} s_{\partial_j(\partial_i(\sigma))}(z_{\partial_j(\partial_i(\sigma))}),
\]
which, by the simplicial identities, equals
\[
(-1)^{n-1}\sum_{k=0}^{n-1} (-1)^k s_{\partial_{k-1}(\partial_0(\sigma))}(z_{\partial_{k-1}(\partial_0(\sigma))}). 
\]
By the induction hypothesis this expression is exactly $z_{\partial_0(\sigma)}=z$. Now, from the description of $z_\sigma$ in \eqref{equ:descriptionzsigmasimplicialcase} and Equations \eqref{equ:dPq} and  \eqref{equationsptppropertiesc}, it is clear that
\[
d^{\P(\Delta)}_\sigma(b_\sigma)=\oplus_{i=0}^n(-1)^i b_{\partial_i(\sigma)},
\]
i.e., we recover the simplicial differential. This fact shows that Theorem \ref{thm_introd_ccsimp} of the introduction holds, see also Example \ref{examplescnumberK}. To illustrate the construction we reproduce it for $\sigma=\{0,1,2\}\in \Delta$ with $0\prec 1\prec 2$. We write simplices by juxtaposition for brevity, for instance, $\sigma=012$. Recall that we want to suspend the basic element $z=z_{12}=1-2$. The algorithm gives:
\[
Z_1=1<12-2<12\text{ and }Z_2=0<01-1<01-0<02+2<02.
\]
Thus for $Z=z_{012}=Z_1+Z_2$ we have:
\[
\partial(Z)=\partial_0(Z)-\partial_1(Z)=12-12+01-01-02+02-(1-2+0-1-0+2)=0.
\]
Moreover, the truncations
\[
t_{12}(Z)=1-2\text{, }t_{02}(Z)=2-0\text{ and }t_{01}(Z)=0-1, 
\]
give, by Equation \eqref{equ:dPq}, that:
\[
d^{\P(\Delta)}_\sigma([Z])=[t_{12}(Z)]\oplus [t_{02}(Z)]\oplus [t_{01}(Z)]=(1-2)\oplus(2-0)\oplus(0-1)=z_{12}\oplus (-z_{02})\oplus z_{01},
\]
and, by Equation \eqref{equ:dKq}, that:
\[
d^{\K_\Delta}_\sigma([Z])=[t_{12}(Z)]=1-2=z=z_{12}.
\]
\end{exa}

\begin{exa}\label{exampleQcdifferential}
For Quillen's complex $\A_p(G)$ of a finite group $G$ at a prime $p$ consider the local covering family $\K_{\A_p(G)}$ of Example \ref{exampleQccoveringfamilyK}. For a subgroup $H\cong C_p^{n+1}$ of dimension $n$ we have  $K^H_n=p^\frac{n(n+1)}{2}$ $(n-1)$-spheres in $|\A_p(G)_{<H}|$ (Example \ref{exampleQcnumberK}). Let $\prec$ be the chosen total order for the rank $1$ subgroups of $G$ and for any subgroup $W\leq H$ set $W^*=\min\{V|\text{$\dim(V)=0$ and $V\leq W$}\}$. Then the spheres in $|\A_p(G)_{<H}|$ are indexed by chain of subgroups $\rho=\{W_0<W_1<\ldots<W_{n-1}<W_n=H\}$, where $W_i$ does not contain $W_{i+1}^*$ for $i=0,\ldots,n-1$ (cf. Lemma \ref{lemmaposetwithlcfislocallyspherical}). It is clear that the chain $\rho$ can be written as 
\[
\rho=\{W_0=W_0^*<W_0^*\times W_1^*<\ldots<W_0^*\times W_1^*\times \ldots \times W_{n-1}^*\times W_n^*=H\}
\]
and that
\[
W_n^*\prec W_{n-1}^*\prec \ldots\prec W_1^*\prec W_0^*.
\]
For each $i\in \{0,\ldots,n\}$, we may define the following chain, constructed forgetting the subgroup $W_i^*$:
\[
\rho_i=\{W_0=W_0^*<W_0^*\times W_1^*<\ldots<W_0^*\times\ldots\times W_{i-1}^*\times W_{i+1}^*\times\ldots\times W_n^*=H_i\}.\]

Note that $H_i\cong C_p^{n}$ and $H_i\leq H$ for all $i$. Furthermore, the subposet of $\A_p(G)_{\leq H}$ with objects the subgroups $W^*_{i_0}\times W^*_{i_1}\times \ldots \times W^*_{i_l}$ with $0\leq i_0<\ldots<i_l\leq n$ and $0\leq l \leq n$ is isomorphic to the poset of non-empty subsets of the set $\{0,1,\ldots,n\}$ under inclusion.  Denote by $b_\rho=[z_\rho]$ and $b_{\rho_i}=[z_{\rho_i}]$ the basic elements associated to the chains $\rho$ and $\rho_i$ respectively by the procedure of Proposition \ref{prop:differentialbasisinduction}. Then the same argument as in Example \ref{examplescdifferential} shows that
\[
z_\rho=(-1)^n\sum_{i=0}^n (-1)^i s_{H_i}( z_{\rho_i})
\]
and that
\[
d^{\A_p(G)}_{H}(b_\rho)=\oplus_{i=0}^n (-1)^i b_{\rho_i}.
\]
This means that the differential of $C^{\K_{\A_p(G)}}_*(\A_p(G);R)$ behaves locally as the simplicial differential.

\end{exa}

\section{Free objects}\label{section:freeobjects}

The next result proves Theorem  \ref{thm_intro_freefaceonp} of the introduction:

\begin{thm}\label{thm_freefaces}
Let $\P$ be a locally $p$-Quillen finite poset at the prime $p$ with $\dim \P=N$. If the homology group $\tilde H_N(|\P|;R)$ is zero then proportion $r$ of free objects among the non-maximal objects of $P_{N-1}$ satisfies:
\[
r\geq \frac{p^{N+1}-2p^N+1}{p^{N+1}-p^N}> 0.
\]

\end{thm}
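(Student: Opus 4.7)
The plan is to apply the chain complex $C_*^\K(\P;R)$ of Theorem \ref{chaincomplexforposetwithlcf} with the local covering family described in the Introduction for locally atom-modular posets (and so, in particular, for locally $p$-Quillen posets). Since $\P$ is locally $p$-Quillen, each $M\in \P_N$ satisfies $\P_{\leq M}\cong \A_p(C_p^{N+1})$, and on this subposet the induced covering family agrees with the one of Example \ref{exampleQccoveringfamilyK}. Consequently, by the computation of Example \ref{exampleQcnumberK}, $K^M_N=p^{N(N+1)/2}$ for every $M\in \P_N$ and $K^q_{N-1}=p^{N(N-1)/2}$ for every $q\in \P_{N-1}$. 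Because $\dim \P=N$ there are no chains in degree $N+1$, so the hypothesis $\widetilde H_N(|\P|;R)=0$ is equivalent to the differential $d\colon C_N\to C_{N-1}$ being injective.

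I would then extract two numerical inequalities. First, the component of $d$ at $M$ maps into summands indexed by $q$ with $q<M$, hence the image of $d$ lies entirely inside the summand of $C_{N-1}$ indexed by the non-maximal objects of $\P_{N-1}$. By the standard invariant-basis-number argument for commutative rings (or by McCoy's theorem on injective maps between free modules) this yields
\[
|\P_N|\cdot p^{N(N+1)/2}\leq b_0\cdot p^{N(N-1)/2},\qquad\text{i.e.,}\quad |\P_N|\cdot p^N\leq b_0,
\]
where $b_0$ denotes the number of non-maximal objects of $\P_{N-1}$. Second, I would double-count the pairs $(q,M)$ with $q\in (\P_{<M})_{N-1}$ and $M\in \P_N$: from the $M$-side each such $M$ has exactly $(p^{N+1}-1)/(p-1)$ elements of dimension $N-1$ below it, namely the hyperplanes of $\FF_p^{N+1}$; from the $q$-side each free $q$ appears in exactly one pair while each non-free, non-maximal $q$ appears in at least two, so
\[
|\P_N|\cdot\frac{p^{N+1}-1}{p-1}\geq f+2(b_0-f)=2b_0-f,
\]
where $f$ is the number of free objects of $\P_{N-1}$.

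Combining $f\geq 2b_0-|\P_N|(p^{N+1}-1)/(p-1)$ with $|\P_N|\leq b_0/p^N$ and simplifying yields
\[
r=\frac{f}{b_0}\geq 2-\frac{p^{N+1}-1}{p^N(p-1)}=\frac{p^{N+1}-2p^N+1}{p^{N+1}-p^N},
\]
which is strictly positive because $p^{N+1}-2p^N+1=p^N(p-2)+1\geq 1$ for every prime $p$. The main (and really only) obstacle I foresee is guaranteeing that the rank step works over an arbitrary commutative ring with identity rather than a field; this is classical and follows, for instance, from McCoy's theorem, so it poses no real difficulty.
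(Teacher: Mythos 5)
Your proposal is correct and follows essentially the same route as the paper's proof: both pass to the chain complex of Theorem \ref{chaincomplexforposetwithlcf}, deduce injectivity of $d_N$ from $\widetilde H_N=0$, note that its image lands in the summands indexed by non-maximal objects of $\P_{N-1}$ to get the rank inequality $|\P_N|p^N\leq b_0$, and combine this with the edge count $|\P_N|(p^{N+1}-1)/(p-1)=\sum_q n_q\geq 2b_0-f$. The only (cosmetic) difference is that you phrase the second step as a direct double-count while the paper introduces the partition into sets $N_k=\{q:n_q=k\}$; the appeal to McCoy's theorem for the rank step over a general \ring{} is a welcome precision.
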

\begin{proof}
By Theorem \ref{chaincomplexforposetwithlcf} there is a chain complex 
\[
\xymatrix@C=10pt{
0\ar[r]&R^{|\P_N|p^\frac{N(N+1)}{2}}\ar[r]^{d_N}& R^{|\P_{N-1}|p^\frac{N(N-1)}{2}}\ar[r]&\ldots\ar[r]& R^{|\P_1|p}\ar[r]& R^{|\P_0|}\ar[r]&R\ar[r]&0
}
\]
whose homology is $\tilde H_*(|\P|;R)$. Write $\P_{N-1}=\P'\cup \P''$, where the objects of $\P'$ are maximal in $\P$ and the objects in $\P''$ are not. So, if for $p\in \P_{N-1}$ we set $n_p$ to be the number of maximal subgroups $q$ of $\P_N$ with $q>p$, we have $n_p=0$ for $p\in \P'$ and $n_p\geq 1$ for $p\in \P''$.  As $\tilde H_N(|\P|;R)=0$ the differential $d_N$ must be injective. In particular, we must have the inequality 
\begin{equation}\label{equationcollapsabilityinj}
|\P_N|p^\frac{N(N+1)}{2}\leq |\P''|p^\frac{(N-1)N}{2}.
\end{equation}
The number of edges in $\P$ among objects of $\P_N$ and objects of $\P_{N-1}$ is
\begin{equation}\label{equationcollapsabilityedges}
|\P_N|\frac{p^{N+1}-1}{p-1}=\sum_{p\in \P_{N-1}} n_p=\sum_{k=1}^K k|N_k|,
\end{equation}
where $N_k=\{p\in \P''|n_p=k\}$ and $K=\max\{n_p|p\in \P''\}$. As $|\P''|=\sum_{k=1}^K |N_k|$ we get from Equations \eqref{equationcollapsabilityinj} and \eqref{equationcollapsabilityedges} that
\begin{equation}\label{equationcollapsabilitymainineq}
\sum_{k=1}^K k|N_k|\leq f(p,N)(\sum_{k=1}^K |N_k|),
\end{equation}
where $f(p,N)=\frac{p^{N+1}-1}{p^N(p-1)}$. Note that $\lim_{N\to \infty} f(p,N)=\frac{p}{p-1}$ and $\lim_{p\to\infty} f(p,N)=1^+$. Now using the inequalities $k|N_k|>2|N_k|$ for $k\geq 3$ we obtain from Equation \eqref{equationcollapsabilitymainineq} that
\[
\frac{|N_1|}{|\P''|}\geq g(p,N),
\]
with $g(p,N)=2-f(p,N)=\frac{p^{N+1}-2p^N+1}{p^N(p-1)}$. Note that $\lim_{N\to \infty} g(p,N)=\frac{p-2}{p-1}$ and $\lim_{p\to\infty} g(p,N)=1^-$. The number $\frac{|N_1|}{|\P''|}$ is the ratio $r$ in the statement.

\end{proof}

\end{document}